\theoremstyle{plain}
\newtheorem {lemma}{Lemma}
\newtheorem {theorem}[lemma]{Theorem}
\newtheorem {corollary}[lemma]{Corollary}
\newtheorem {replacement lemma}[lemma]{Replacement Lemma}
\theoremstyle{definition}
\newtheorem{definition}[lemma]{Definition}
\newtheorem{remark}[lemma]{Remark}
\newtheorem {example}[lemma]{Example}
\newcommand{\N}{\mathbb{N}}
\newcommand{\X}{\langle X \rangle}
\newcommand{\GKdim}{\operatorname{GKdim}}
\newcommand{\st}{\operatorname{st}}
\newcommand{\G}{\operatorname{\mathcal{G}^w}}
\newcommand{\M}{\operatorname{\mathcal{M}^{ab}}}
\newcommand{\A}{\operatorname{\mathcal{A}_K}}
\newcommand{\Mat}{\operatorname{\mathbb{M}}}
\newcommand{\im}{\operatorname{im}}
\newcommand{\id}{\operatorname{id}}
\title{The $V$-monoid of a weighted Leavitt path algebra}
\author{Raimund Preusser}
\address{Department of Mathematics,
University of Brasilia, Brazil}
\email{raimund.preusser@gmx.de}
\subjclass[2000]{16S10, 16W10, 16W50, 16D70} 
\keywords{Weighted Leavitt path algebra, finitely generated projective modules, $K_0$}
\begin{document}

\begin{abstract} 
We compute the $V$-monoid of a weighted Leavitt path algebra of a row-finite weighted graph, correcting a wrong computation of the $V$-monoid that exists in the literature. Further we show that the description of $K_0$ of a weighted Leavitt path algebra that exists in the literature is correct (although the computation was based on a wrong $V$-monoid description).
\end{abstract}

\maketitle

\section{Introduction}
The weighted Leavitt path algebras (wLpas) were introduced by R. Hazrat in \cite{hazrat13}. They generalise the Leavitt path algebras (Lpas). While the Lpas only embrace Leavitt's algebras $L_K(1,1+k)$ where $K$ is a field and $k\geq 0$, the wLpas embrace all of Leavitt's algebras $L_K(n,n+k)$ where $K$ is a field, $n\geq 1$ and $k\geq 0$. In \cite{hazrat-preusser} linear bases for wLpas were obtained. They were used to classify the simple and graded simple wLpas and the wLpas which are domains. In \cite{preusser} the Gelfand-Kirillov dimension of a weighted Leavitt path algebra $L_K(E,w)$, where $K$ is a field and $(E,w)$ is a row-finite weighted graph, was determined. Further finite-dimensional wLpas were investigated. In \cite{preusser1} locally finite wLpas were investigated.

The $V$-monoid $V(R)$ of an associative, unital ring $R$ is the set of all isomorphism classes of finitely generated projective right $R$-modules, which becomes an abelian monoid by defining $[P]+[Q]:=[P\oplus Q]$ for any $[P],[Q]\in V(R)$. It can also be defined using matrices and the definition can be extended to include all associative rings, see Section 4. For an associative ring $R$ with local units, the Grothendieck group $K_0(R)$ is the group completion $V(R)^+$ of $V(R)$.

A presentation for $V(L_K(E,w))$ where $K$ is a field and $(E,w)$ is a row-finite weighted graph was given in \cite[Theorem 5.21]{hazrat13}. In \cite{hazrat-preusser} this presentation was used in order to show that there is a huge class $C$ of wLpas that are domains but neither are isomorphic to an Lpa nor to a Leavitt algebra. Unfortunately, \cite[Theorem 5.21]{hazrat13} is wrong as we will show in Section 4. In this paper we correct the false \cite[Theorem 5.21]{hazrat13}. It turns out that the statement of \cite[Theorem 5.21]{hazrat13} is true at least for row-finite weighted graphs $(E,w)$ that have the property that for any vertex $v\in E^0$ all the edges in $s^{-1}(v)$ have the same weight. Further it turns out, surprisingly, that \cite[Theorem 5.23]{hazrat13}, which gives a presentation for the Grothendieck group $K_0(L_K(E,w))$ of a wLpa, is correct. 

The rest of this paper is organised as follows. In Section 2 we recall some standard notation which is used throughout the paper. In Section 3 we recall the definition of a wLpa. In Section 4 we prove our main result Theorem \ref{thmm} and show that the description of $K_0(L_K(E,w))$ obtained in \cite{hazrat13} is correct. Moreover, we show that there is a class $D$, containing the class $C$ mentioned above, that consists of wLpas that are domains but neither are isomorphic to an Lpa nor to a Leavitt algebra. In the last section we determine the $V$-monoids of some concrete examples of wLpas.
\section{Notation}
Throughout the paper $K$ denotes a field. $\N$ denotes the set of positive integers and $\N_0$ the set of nonnegative integers. If $m,n\in\N$ and $R$ is a ring, then $\Mat_{m\times n}(R)$ denotes the set of $m\times n$-matrices whose entries are elements of $R$. Instead of $\Mat_{n\times n}(R)$ we might write $\Mat_{n}(R)$. 
\section{Weighted Leavitt path algebras}

\begin{definition}\label{defdg}
A {\it directed graph} is a quadruple $E=(E^0,E^1,s,r)$ where $E^0$ and $E^1$ are sets and $s,r:E^1\rightarrow E^0$ maps. The elements of $E^0$ are called {\it vertices} and the elements of $E^1$ {\it edges}. If $e$ is an edge, then $s(e)$ is called its {\it source} and $r(e)$ its {\it range}. $E$ is called {\it row-finite} if $s^{-1}(v)$ is a finite set for any vertex $v$ and {\it finite} if $E^0$ and $E^1$ are finite sets.
\end{definition}
\begin{definition}
A {\it weighted graph} is a pair $(E,w)$ where $E$ is a directed graph and $w:E^1\rightarrow \N$ is a map. If $e\in E^1$, then $w(e)$ is called the {\it weight} of $e$. $(E,w)$ is called {\it row-finite} (resp. {\it finite}) if $E$ is row-finite (resp. finite). In this article all weighted graphs are assumed to be row-finite. For a vertex $v\in E^0$ we set $w(v):=\max\{w(e)\mid e\in s^{-1}(v)\}$ with the convention $\max \emptyset=0$.
\end{definition}
\begin{remark}
In \cite{hazrat13} and \cite{hazrat-preusser}, $E^1$ was denoted by $E^{\st}$. The set $\{e_i \mid e\in E^1, 1\leq i\leq w(e)\}$ was denoted by $E^1$.
\end{remark}
\begin{definition}\label{def3}
Let $(E,w)$ be a weighted graph. The associative $K$-algebra presented by the generating set $E^0\cup \{e_i,e_i^*\mid e\in E^1, 1\leq i\leq w(e)\}$ and the relations
\begin{enumerate}[(i)]
\item $uv=\delta_{uv}u\quad(u,v\in E^0)$,
\item $s(e)e_i=e_i=e_ir(e),~r(e)e_i^*=e_i^*=e_i^*s(e)\quad(e\in E^1, 1\leq i\leq w(e))$,
\item $\sum\limits_{e\in s^{-1}(v)}e_ie_j^*= \delta_{ij}v\quad(v\in E^0,1\leq i, j\leq w(v))$ and
\item $\sum\limits_{1\leq i\leq w(v)}e_i^*f_i= \delta_{ef}r(e)\quad(v\in E^0,e,f\in s^{-1}(v))$
\end{enumerate}
is called {\it weighted Leavitt path algebra (wLpa) of $(E,w)$} and is denoted by $L_K(E,w)$. In relations (iii) and (iv), we set $e_i$ and $e_i^*$ zero whenever $i > w(e)$. 
\end{definition}


\begin{example}\label{exex1}
If $(E,w)$ is a weighted graph that $w(e)=1$ for all $e \in E^{1}$, then $L_K(E,w)$ is isomorphic to the usual Leavitt path algebra $L_K(E)$. 
\end{example}

\begin{example}\label{wlpapp}
Let $n\geq 1$ and $k\geq 0$. If $(E,w)$ is a weighted graph with precisely one vertex and precisely $n+k$ edges each of which has weight $n$, then $L_K(E,w)$ is isomorphic to the Leavitt algebra $L_K(n,n+k)$, for details see \cite[Example 4]{hazrat-preusser}. 
\end{example}

\begin{remark}
Let $(E,w)$ be a weighted graph. Then there is an involution $*$ on $L_K(E,w)$ mapping $x\mapsto x$, $v\mapsto v$, $e_i\mapsto e_i^*$ and $e_i^*\mapsto e_i$ for any $x\in K$, $v\in E^0$, $e\in E^1$ and $1\leq i\leq w(e)$, see \cite[Proof of Proposition 5.7]{hazrat13}. If $m,n\in\N$, then $*$ induces a map $\Mat_{m,n}(L_K(E,w))\rightarrow \Mat_{n,m}(L_K(E,w))$ mapping a matrix $\sigma$ to the matrix $\sigma^*$  one gets by transposing $\sigma$ and then applying the involution $*$ to each entry.
\end{remark}
\section{The $V$-monoid of a weighted Leavitt path algebra}
Consider the weighted graphs\\
\[
(E,w):\xymatrix@C+15pt{ u& v\ar[l]_{e,2}\ar[r]^{f,2}& x}\quad\text{ and }\quad(E,w'):\xymatrix@C+15pt{ u& v\ar[l]_{e,1}\ar[r]^{f,2}& x}.
\]\\
Set $L:=L_K(E,w)$ and $L':=L_K(E,w')$. According to \cite[Theorem 5.21]{hazrat13} it is true that
\[V(L)\cong V(L')\cong \N_0^{E^0}/\langle 2\alpha_v=\alpha_u+\alpha_x\rangle\]
where for any $y\in E^0$, $\alpha_y$ denotes the element of $\N_0^{E^0}$ whose $y$-component is one and whose other components are zero. Hence $V(L')$ is not a refinement monoid. But in \cite[Example 47]{preusser} it was shown that $L'\cong M_3(K)\oplus M_3(K)$. Hence $V(L')\cong \N_0^2$ and therefore $V(L')$ is a refinement monoid. In view of this contradiction, is \cite[Theorem 5.21]{hazrat13} wrong?

First we consider the algebra $L$. By the relations for the generators of a wLpa (see Definition \ref{def3}), the matrix $A:=\begin{pmatrix}e_1&f_1\\e_2&f_2\end{pmatrix}\in \Mat_2(L)$ defines a ``universal" (in the sense of ``as general as possible") isomorphism $uL\oplus xL\rightarrow vL\oplus vL$ (by left multiplication). Set 
\begin{align*}
B_0&:=K^{E^0}\text{ and }\\
B_1&:=B_0\langle i,i^{-1}:\overline  {\alpha_uB_0\oplus\alpha_xB_0}\cong \overline  {\alpha_vB_0\oplus\alpha_vB_0}\rangle
\end{align*}
(see \cite[p. 38]{bergman74}) where for any $y\in E^0$, $\alpha_y$ denotes the element of $B_0$ whose $y$-component is one and whose other components are zero. One checks easily that $L\cong B_1$. It follows from \cite[Theorem 5.2]{bergman74} that 
\[V(L)\cong \N_0^{E^0}/\langle \alpha_u+\alpha_x=2\alpha_v\rangle\]
and hence \cite[Theorem 5.21]{hazrat13} yields a correct presentation for $V(L)$. 


For the algebra $L'$ the situation is a bit different. By the relations for the generators of a wLpa, the matrix $A:=\begin{pmatrix}e_1&f_1\\0&f_2\end{pmatrix}\in \Mat_2(L')$ defines an isomorphism $uL'\oplus xL'\rightarrow vL'\oplus vL'$, but this isomorphism is not universal since an entry of $A$ is zero. Since $A$ defines an isomorphism $uL'\oplus xL'\rightarrow vL'\oplus vL'$, we have $vL'\oplus vL'=P\oplus Q$ where $P=\im \begin{pmatrix}e_1\\0\end{pmatrix}$ and $Q=\im\begin{pmatrix}f_1\\f_2\end{pmatrix}$. But this splitting into two direct summands is not universal, since $P$ is already a direct summand of $vL'$. Instead $vL'$ universally breaks up into two direct summands, namely $vL'\cong P\oplus O$ where $O=\im \begin{pmatrix}f_1\\0\end{pmatrix}$. Clearly $e_1$ defines a universal isomorphism $uL'\cong P$ and $\begin{pmatrix}f_1\\f_2\end{pmatrix}$ defines a universal isomorphism $xL'\cong O \oplus vL'$. Hence we can describe $L'$ as follows. Set 
\begin{align*}
B_0&:=K^{E^0},\\
B_1&:=B_0\langle\epsilon:\overline{\alpha_vB_0}\rightarrow\overline{\alpha_vB_0};\epsilon^2=\epsilon\rangle,\\
B_2&:=\langle i,i^{-1}:\overline  {\alpha_u B_1}\cong \overline  {\ker \epsilon}\rangle\text{ and }\\
B_3&:=\langle j,j^{-1}:\overline  {\alpha_x B_2}\cong \overline  {\im \epsilon\oplus \alpha_vB_2}\rangle 
\end{align*} 
(see \cite[pp. 38-39]{bergman74}). One checks easily that $L'\cong B_3$ (for details see Theorem \ref{thmm}, Part II). It follows from \cite[Theorems 5.1, 5.2]{bergman74} that
\[V(L')\cong \N_0^{E^0\cup\{p,q\}}/\langle \alpha_p+\alpha_q=\alpha_v,\alpha_u=\alpha_p,\alpha_x=\alpha_q+\alpha_v\rangle\cong \N_0^2.\]
Thus \cite[Theorem 5.21]{hazrat13} indeed is wrong.

In this section we repair \cite[Theorem 5.21]{hazrat13}. Further we show that \cite[Theorem 5.23]{hazrat13}, which gives a presentation for $K_0(L_K(E,w))$ where $(E,w)$ is any weighted graph, is correct. In particular $K_0(L)\cong K_0(L')$ where $L$ and $L'$ are the wLpas defined above, while $V(L)\not\cong V(L')$.

We denote by $\G$ the category whose objects are all weighted graphs and whose morphisms are the complete weighted graph homomorphisms between weighted graphs (see \cite[p. 884]{hazrat13}). Further we denote the category of associative $K$-algebras by $\A$ and the category of abelian monoids by $\M$. We start by defining three functors, $L_K:\G\rightarrow \A$, $V:\A\rightarrow \M$ and $M:\G\rightarrow \M$. We will then show that $V\circ L_K\cong M$.
\begin{definition}
In Definition \ref{def3} we associated to any weighted graph $(E,w)$ an associative $K$-algebra $L_K(E,w)$. If $\phi:(E,w)\rightarrow (E',w')$ is a morphism in $\mathcal{G}^w$, then there is a unique $K$-algebra homomorphism $L_K(\phi):L_K(E,w)\rightarrow L_K(E',w')$ such that $L_K(\phi)(v)=\phi^0(v)$, $L_K(\phi)(e_i)=(\phi^1(e))_i$ and $L_K(\phi)(e_i^*)=(\phi^1(e))_i^*$ for any $v\in E^0$, $e\in E^1$ and $1\leq i \leq w(e)$. One checks easily that $L_K:\G \rightarrow \A$ is a functor that commutes with direct limits.
\end{definition}
\begin{definition}
Let $A$ be an associative $K$-algebra. Let $\Mat_\infty(A)$ be the directed union of the rings $M_n(A)~(n\in\N)$, where the transition maps $M_n(A)\rightarrow M_{n+1}(A)$ are given by $x\mapsto \begin{pmatrix}x&0\\0&0\end{pmatrix}$. Let $I(\Mat_\infty(A))$ denote the set of all idempotent elements of $\Mat_\infty(A)$. If $e, f\in I(\Mat_\infty(A))$, write $e\sim f$ iff there are $x,y\in \Mat_\infty(A)$ such that
$e = xy$ and $f = yx$. Then $\sim$ is an equivalence relation on $I(\Mat_\infty(A))$. Let $V(A)$ be the set of all $\sim$-equivalence classes, which becomes an abelian monoid by defining
\[[e]+[f]=\left[\begin{pmatrix}e&0\\0&f\end{pmatrix}\right]\]
for any $[e],[f]\in V(A)$. If $\phi:A\rightarrow B$ is a morphism in $\A$, let $V(\phi):V(A)\rightarrow V(B)$ be the canonical monoid homomorphism induced by $\phi$. One checks easily that $V:\A\rightarrow \M$ is a functor that commutes with direct limits.
\end{definition}
\begin{remark}\label{rempro}
Let $A$ be an associative, unital $K$-algebra. Let $V'(A)$ denote the set of isomorphism classes of finitely generated projective right $A$-modules, which becomes an abelian monoid by defining $[P]+[Q]:=[P\oplus Q]$ for any $[P],[Q]\in V'(A)$. Then $V'(A)\cong V(A)$ as abelian monoids, see \cite[Definition 3.2.1]{abrams-ara-molina}.
\end{remark}
\begin{definition}\label{defM}
Let $(E,w)$ be a weighted graph. For any $v\in E^0$ write $w(s^{-1}(v))=\{w_1(v),\dots,$ $w_{k_v}(v)\}$ where $k_v\geq 0$ and $w_1(v)<\dots<w_{k_v}(v)$ (hence $k_v$ is the number of different weights of the edges in $s^{-1}(v)$). Further set $w_0(v):=0$ for any $v\in E^0$ (note that with this convention one has $w_{k_v}(v)=w(v)$ for any $v\in E^0$). Let $M(E,w)$ be the abelian monoid presented by the generating set $\{v,q_1^v,\dots,q^v_{k_v-1}\mid v\in E^0\}$ and the relations
\begin{equation}
q^v_{i-1}+(w_i(v)-w_{i-1}(v))v=q_i^v+\sum\limits_{\substack{e\in s^{-1}(v),\\w(e)=w_i(v)}}r(e)\quad\quad(v\in E^0,1\leq i\leq k_v)
\end{equation}
where $q^v_0=q^v_{k_v}=0$. If $\phi:(E,w)\rightarrow (E',w')$ is a morphism in $\mathcal{G}^w$, then there is a unique monoid homomorphism $M(\phi):M(E,w)\rightarrow M(E',w')$ such that $M(\phi)([v])=[\phi^0(v)]$ and $M(\phi)([q_i^v])=[q_i^{\phi^0(v)}]$ for any $v\in E^0$ and $1\leq i \leq k_v-1$. One checks easily that $M:\G\rightarrow \M$ is a functor that commutes with direct limits.
\end{definition}
\begin{remark}
If $k_v\leq 1$ for any $v\in E^0$, then $M(E,w)$ is the abelian monoid $M_E$ defined in \cite[Theorem 5.21]{hazrat13}.
\end{remark}
\begin{lemma}\label{lemmon}
Let $G$ be an abelian group (resp. an abelian monoid) presented by a generating set $X$ and relations 
\[l_i=r_i~ (i\in I)\text{ and }y=\sum\limits_{x\in X\setminus\{y\}}n_xx\]
where for any $i\in I$, $l_i$ and $r_i$ are elements of the free abelian group (resp. the free abelian monoid) $G\X$ generated by $X$, $y$ is an element of $X$, the $n_x$ are integers (resp. nonnegative integers) and only finitely of them are nonzero. Let $G\langle X\setminus\{y\}\rangle$ be the free abelian group (resp. the free abelian monoid) generated by $X\setminus \{y\}$ and $f:G\X\rightarrow G\langle X\setminus\{y\}\rangle$ the homomorphism which maps each $x\in X\setminus\{y\}$ to $x$ and $y$ to $\sum\limits_{x\in X\setminus\{y\}}n_xx$. Then $G$ is also presented by the generating set $X\setminus\{y\}$ and the relations $f(l_i)=f(r_i)~(i\in I)$.
\end{lemma}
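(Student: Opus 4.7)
The plan is to build mutually inverse homomorphisms between $G$ and the object $G'$ presented by $X\setminus\{y\}$ with relations $f(l_i)=f(r_i)$; this is a standard Tietze-style elimination of the generator $y$ via the relation expressing it in terms of the remaining generators.

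First I would construct $\phi:G'\to G$. The inclusion $X\setminus\{y\}\hookrightarrow X$ induces a homomorphism $g:G\langle X\setminus\{y\}\rangle\to G\X$; let $\pi:G\X\to G$ be the canonical quotient. Since the relation $y=\sum_{x\in X\setminus\{y\}}n_xx$ holds in $G$, one checks on generators that $\pi\circ g\circ f=\pi$. Hence $\pi(g(f(l_i)))=\pi(l_i)=\pi(r_i)=\pi(g(f(r_i)))$ for each $i\in I$, so $\pi\circ g$ vanishes on the defining relations of $G'$ and descends to the desired map $\phi$.

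Next I would construct $\psi:G\to G'$. Let $\pi':G\langle X\setminus\{y\}\rangle\to G'$ be the quotient and consider $\pi'\circ f:G\X\to G'$. On the relation $l_i=r_i$ it acts as $\pi'(f(l_i))=\pi'(f(r_i))$, which holds in $G'$ by construction. On the relation $y=\sum_{x\in X\setminus\{y\}}n_xx$ it yields the tautology $\sum n_xx=\sum n_xx$ in $G'$, since $f$ fixes every $x\in X\setminus\{y\}$ and sends $y$ to $\sum n_xx$. Hence $\pi'\circ f$ descends to $\psi$.

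Finally, a direct check on generators shows $\phi\circ\psi=\id_G$ and $\psi\circ\phi=\id_{G'}$; the only nontrivial verification is $\phi(\psi(y))=\phi(\sum n_xx)=\sum n_xx=y$ in $G$, which uses precisely the relation defining $G$. I do not anticipate any real difficulty: the argument is identical in the group and monoid cases, and the hypothesis $n_x\geq 0$ enters only to guarantee that $f$ is a well-defined monoid homomorphism in the latter setting.
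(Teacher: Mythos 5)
Your argument is correct and is precisely the standard Tietze-elimination proof that the paper leaves implicit by declaring the lemma ``Straightforward.'' The two induced maps are well defined for exactly the reasons you give, the inverse checks are complete (the only nontrivial one being $\phi(\psi(y))=y$, which you correctly identify as using the eliminated relation), and the remark about $n_x\geq 0$ in the monoid case is apt.
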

\begin{proof}
Straightforward.
\end{proof}
\begin{theorem}\label{thmm}
$V\circ L_K\cong M$. Moreover, if $(E,w)$ is finite, then $L_K(E,w)$ is left and right hereditary.
\end{theorem}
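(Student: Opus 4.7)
The plan is: (i) reduce to finite $(E,w)$; (ii) realise $L_K(E,w)$ as the terminal ring of a finite chain of Bergman universal constructions starting from $B_0:=K^{E^0}$; (iii) read off $V\circ L_K\cong M$ and the hereditary property from \cite[Theorems 5.1 and 5.2]{bergman74}.

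Since $L_K$, $V$ and $M$ all commute with direct limits and every row-finite weighted graph is the filtered colimit of its finite weighted subgraphs, it suffices to prove the natural isomorphism $V\circ L_K\cong M$ on finite $(E,w)$; the hereditary claim is only asserted for such $(E,w)$ anyway. Fix a finite $(E,w)$ and set $B_0:=K^{E^0}$. I would process the vertices of positive out-degree in some fixed order; for each vertex $v$ with $k_v\geq 1$, set $Q_0^v:=Q_{k_v}^v:=0$ and $R_i^v:=\bigoplus_{e\in s^{-1}(v),\,w(e)=w_i(v)}\overline{\alpha_{r(e)} B}$, and process the weight levels $i=1,\dots,k_v$ in order: for each $i<k_v$, first adjoin via \cite[Theorem 5.1]{bergman74} an idempotent endomorphism $\epsilon_i^v$ of the projective module $Q_{i-1}^v\oplus\overline{(\alpha_v B)^{w_i(v)-w_{i-1}(v)}}$, set $Q_i^v:=\overline{\im\epsilon_i^v}$, and then adjoin via \cite[Theorem 5.2]{bergman74} a universal isomorphism $R_i^v\cong\overline{\ker\epsilon_i^v}$; at the final level $i=k_v$, adjoin only the universal isomorphism
\[R_{k_v}^v\;\cong\;Q_{k_v-1}^v\oplus\overline{(\alpha_v B)^{w_{k_v}(v)-w_{k_v-1}(v)}}.\]
Denote the resulting terminal Bergman ring by $B_N$.

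The entries of the adjoined universal isomorphisms and of their formal adjoints are the natural candidates for the wLpa generators $e_j,e_j^*$. The main technical obstacle is to verify both that these entries in $B_N$ satisfy exactly the defining relations (i)--(iv) of Definition \ref{def3}, and, conversely, that inside $L_K(E,w)$ one can extract, at each vertex with $k_v\geq 2$ and each $i<k_v$, an idempotent of the appropriate endomorphism ring that reproduces $\epsilon_i^v$. This ``staircase decomposition'' argument generalises the two-vertex calculation at the start of this section, and together the two checks give $L_K(E,w)\cong B_N$.

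With this identification in hand, \cite[Theorems 5.1 and 5.2]{bergman74} yield a presentation of $V(L_K(E,w))$ whose generators are the $[\alpha_v B]$ and, for each $v$ with $k_v\geq 2$ and each $i<k_v$, both $[Q_i^v]$ and $[\ker\epsilon_i^v]$, with defining relations $[\ker\epsilon_i^v]+[Q_i^v]=[Q_{i-1}^v]+(w_i(v)-w_{i-1}(v))[\alpha_v B]$ and $[R_i^v]=[\ker\epsilon_i^v]$ for $1\leq i<k_v$, together with $[R_{k_v}^v]=[Q_{k_v-1}^v]+(w_{k_v}(v)-w_{k_v-1}(v))[\alpha_v B]$. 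Each auxiliary class $[\ker\epsilon_i^v]$ is thereby a positive combination of the remaining generators, so Lemma \ref{lemmon} eliminates them all and leaves precisely the presentation of $M(E,w)$ from Definition \ref{defM}; naturality in $(E,w)$ is immediate from the universal properties involved, giving $V\circ L_K\cong M$. Finally, $B_0=K^{E^0}$ is semisimple (and hence left and right hereditary), and each of \cite[Theorems 5.1 and 5.2]{bergman74} preserves left and right hereditariness, so the finite chain $B_0\subset B_1\subset\cdots\subset B_N$ terminates in a left and right hereditary ring.
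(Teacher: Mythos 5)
Your overall strategy is exactly the paper's: reduce to finite $(E,w)$, realise $L_K(E,w)$ as a tower of Bergman constructions over $K^{E^0}$ (one adjoined idempotent and one universal isomorphism per weight level at each vertex; your interleaved ordering versus the paper's ``all idempotents first'' is immaterial), read off a presentation of the $V$-monoid from \cite[Theorems 5.1, 5.2]{bergman74}, and eliminate the auxiliary generators $[\ker\epsilon_i^v]$ with Lemma \ref{lemmon}. But the two steps you defer are where the actual content lies. You flag the identification $L_K(E,w)\cong B_N$ as ``the main technical obstacle'' and then simply assert it. The paper resolves it by exhibiting the inverse maps explicitly: ordering $s^{-1}(v)=\{e^{1,v},\dots,e^{n(v),v}\}$ by increasing weight and forming the $w(v)\times n(v)$ matrix $A(v)$ with $(i,j)$-entry $e^{j,v}_i$, the idempotent in $\Mat_{w_i(v)}(L_K(E,w))$ realising your $\epsilon_i^v$ is $A^{n_i,n_{k_v}}_{w_0,w_i}\bigl(A^{n_i,n_{k_v}}_{w_0,w_i}\bigr)^*$, built from the columns of $A(v)$ indexed by the edges of weight exceeding $w_i(v)$ and truncated to the first $w_i(v)$ rows; relations (iii) and (iv) of Definition \ref{def3} are precisely what make these matrices idempotent and make the remaining blocks of $A(v)$ define the required universal isomorphisms. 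Without writing these elements down, the claim that $L_K(E,w)$ has the universal property of $B_N$ is unverified, and it is the only nontrivial part of the argument.

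Second, the reduction to finite graphs needs more than an isomorphism for each finite $(E,w)$: to pass to the colimit you must have a \emph{natural} transformation $M\rightarrow V\circ L_K$ compatible with the transition maps of the direct system, and your $B_N$ depends on a chosen ordering of the vertices and of the edges in each $s^{-1}(v)$, which a complete weighted graph homomorphism has no reason to respect. ``Immediate from the universal properties'' does not dispose of this. The paper therefore first defines $\theta_{(E,w)}$ for \emph{arbitrary} $(E,w)$ by the explicit formulas $v\mapsto[(v)]$, $q_i^v\mapsto[\epsilon_i(v)]$, notes that $\epsilon_i(v)$ is independent of the weight-respecting ordering chosen, and only then invokes the finite case to show each $\theta_{(E,w)}$ is bijective. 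A smaller remark: Bergman's theorems control the \emph{right} global dimension; rather than asserting that the constructions preserve hereditariness on both sides, the paper gets left hereditariness from the involution, via $L_K(E,w)\cong L_K(E,w)^{\mathrm{op}}$.
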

\begin{proof}
We have divided the proof in two parts, Part I and Part II. In Part I we define a natural transformation $\theta:M\rightarrow V\circ L_K$. In Part II we show that $\theta$ is a natural isomorphism and further that $L_K(E,w)$ is left and right hereditary provided that $(E,w)$ is finite.\\
\\  
{\bf Part I}
Let $(E,w)$ be a weighted graph. Let $v\in E^0$ be a vertex that emits edges (i.e. $s^{-1}(v)\neq \emptyset$). Write $s^{-1}(v)=\{e^{1,v}, \dots, e^{n(v),v}\}$ where $w(e^{1,v})\leq\dots \leq w(e^{n(v),v})$. Let $A=A(v)\in\Mat_{w(v)\times n(v)}(L_K(E,w))$ be the matrix whose entry at position $(i,j)$ is $e^{j,v}_i$ (we set $e^{j,v}_i:=0$ if $i>w(e^{j,v})$). By relations (iii) and (iv) in Definition \ref{def3} we have that
\begin{equation}
AA^*=\begin{pmatrix} v&& \\&\ddots &\\&&v\end{pmatrix}\in\Mat_{w(v)}(L_K(E,w))
\end{equation}
and
\begin{equation}
A^*A=\begin{pmatrix} r(e^{1,v})&& \\&\ddots &\\&&r(e^{n(v),v})\end{pmatrix}\in\Mat_{n(v)}(L_K(E,w)).
\end{equation}
As in Definition \ref{defM}, set $w_0(v):=0$ and write $w(s^{-1}(v))=\{w_1(v),\dots,w_{k_v}(v)\}$ where $w_1(v)<\dots<w_{k_v}(v)$. For any $0\leq l\leq k_v$ set $n_l(v):=|s^{-1}(v)\cap w^{-1}(\{w_0(v), \dots , w_l(v)\})|$ (note that $n_0(v)=0$ and $n_{k_v}(v)=n(v)$). For $0\leq l<t\leq k_v$ and $0\leq l'<t'\leq k_v$ let $A^{n_{l'},n_{t'}}_{w_{l},w_{t}}=A^{n_{l'},n_{t'}}_{w_{l},w_{t}}(v)\in\Mat_{(w_{t}(v)-w_{l}(v))\times (n_{t'}(v)-n_{l'}(v))}(L_K(E,w))$ be the matrix whose entry at position $(i,j)$ is $e^{n_{l'}(v)+j,v}_{w_{l}(v)+i}$. Then $A$ has the block form
\[A=\begin{pmatrix}
A^{n_0,n_1}_{w_0,w_1}&A^{n_1,n_2}_{w_0,w_1}&\dots&A^{n_{k_v-1},n_{k_v}}_{w_0,w_1}\\
0&A^{n_1,n_2}_{w_1,w_2}&\dots&A^{n_{k_v-1},n_{k_v}}_{w_1,w_2}\\
0&0&\ddots&\vdots\\
0&0&0&A^{n_{k_v-1},n_{k_v}}_{w_{k_v-1},w_{k_v}}
\end{pmatrix}\]
and $A^*$ has the block form 
\[A^*=\begin{pmatrix}
(A^{n_0,n_1}_{w_0,w_1})^*&0&0&0\\
(A^{n_1,n_2}_{w_0,w_1})^*&(A^{n_1,n_2}_{w_1,w_2})^*&0&0\\
\vdots&\vdots&\ddots&0\\
(A^{n_{k_v-1},n_{k_v}}_{w_0,w_1})^*&(A^{n_{k_v-1},n_{k_v}}_{w_1,w_2})^*&\hdots&(A^{n_{k_v-1},n_{k_v}}_{w_{k_v-1},w_{k_v}})^*
\end{pmatrix}.\]
For any $1\leq l \leq k_v-1$ set $\epsilon_l=\epsilon_l(v):=A^{n_{l},n_{k_v}}_{w_0,w_l}(A^{n_{l},n_{k_v}}_{w_0,w_l})^*\in \Mat_{w_l(v)}(L_K(E,w))$. 
It follows from equation (2) that 
\begin{equation}
\epsilon_l=\begin{pmatrix} v&& \\&\ddots &\\&&v\end{pmatrix}-A^{n_{0},n_l}_{w_0,w_l}(A^{n_{0},n_l}_{w_0,w_l})^*.
\end{equation}
By equation (3) we have
\begin{equation}
(A^{n_{0},n_l}_{w_0,w_l})^*A^{n_{0},n_l}_{w_0,w_l}=\begin{pmatrix} r(e^{1,v})&& \\&\ddots &\\&&r(e^{n_l(v),v})\end{pmatrix}.
\end{equation}
Equations (4) and (5) imply that $\epsilon_l$ is an idempotent matrix for any $1\leq l\leq k_v-1$.\\
Let $F$ be the free abelian monoid generated by the set $\{v,q_1^v,\dots,q^v_{k_v-1}\mid v\in E^0\}$. There is a unique monoid homomorphism $\psi:F\rightarrow V(L_K(E,w))$ such that $\psi(v)=[(v)]$ and $\psi(q_l^v)=[\epsilon_l(v)]$ for any $v\in E_0$ and $1\leq l \leq k_v-1$. In order to show that $\psi$ induces a monoid homomorphism $M(E,w)\rightarrow V(L_K(E,w))$ we have to check that $\psi$ preserves the relations (1), i.e.
\begin{align}
&\psi(q^v_{l-1}+(w_l(v)-w_{l-1}(v))v)=\psi(q_l^v+\sum\limits_{\substack{e\in s^{-1}(v),\\w(e)=w_l(v)}}r(e))\nonumber
\\
\Leftrightarrow &\left[\begin{pmatrix}\epsilon_{l-1}(v)&&&\\&v&&\\&&\ddots&\\&&&v\end{pmatrix}\right]=\left[\begin{pmatrix}\epsilon_{l}(v)&&&\\&r(e^{n_{l-1}(v)+1,v})&&\\&&\ddots&\\&&&r(e^{n_l(v),v})\end{pmatrix}\right]
\end{align}
for any $v\in E^0$ and $1\leq l \leq k_v$ (where $\epsilon_{0}(v)$ and $\epsilon_{k_v}(v)$ are the empty matrix). Set
\[X_l(v):=\begin{pmatrix}\epsilon_l(v)& A^{n_{l-1},n_l}_{w_0,w_l}(v)\end{pmatrix}\text{ and }Y_l(v):=(X_l(v))^*=\begin{pmatrix}\epsilon_l(v)\\ (A^{n_{l-1},n_l}_{w_0,w_l}(v))^*\end{pmatrix}.\]
Clearly 
\begin{align}
X_l(v)Y_l(v)&=\epsilon_l(v)+A^{n_{l-1},n_l}_{w_0,w_l}(v)(A^{n_{l-1},n_l}_{w_0,w_l}(v))^*.
\end{align}
Writing $A^{n_{0},n_l}_{w_0,w_l}(v)$ in block form
\[A^{n_{0},n_l}_{w_0,w_l}(v)=\begin{pmatrix}A^{n_{0},n_{l-1}}_{w_0,w_{l-1}}(v)&A^{n_{l-1},n_l}_{w_0,w_{l-1}}(v)\\0&A^{n_{l-1},n_{l}}_{w_{l-1},w_l}(v)\end{pmatrix}\]
it is easy to deduce from equation (4) that 
\begin{equation}
\epsilon_l(v)=\begin{pmatrix}\epsilon_{l-1}(v)&&&\\&v&&\\&&\ddots&\\&&&v\end{pmatrix}-A^{n_{l-1},n_l}_{w_0,w_{l}}(v)(A^{n_{l-1},n_l}_{w_0,w_{l}}(v))^*.
\end{equation}
By equations (7) and (8) we have
\begin{equation*}
X_l(v)Y_l(v)=\begin{pmatrix}\epsilon_{l-1}(v)&&&\\&v&&\\&&\ddots&\\&&&v\end{pmatrix}.
\end{equation*}
On the other hand one checks easily that 
\[Y_l(v)X_l(v)=\begin{pmatrix}\epsilon_{l}(v)&&&\\&r(e^{n_{l-1}(v)+1,v})&&\\&&\ddots&\\&&&r(e^{n_l(v),v})\end{pmatrix}\]
(note that $(A^{n_{l},n_{k_v}}_{w_0,w_l}(v))^*A^{n_{l-1},n_l}_{w_0,w_l}(v)=0$ by equation (3); hence $\epsilon_l(v)A^{n_{l-1},n_l}_{w_0,w_l}(v)=0$ and $(A^{n_{l-1},n_l}_{w_0,w_l}(v))^*\epsilon_l(v)=0$). Thus equation (6) holds for any $v\in E^0$ and $1\leq l \leq k_v$ and therefore $\psi$ induces a monoid homomorphism $\theta_{(E,w)}:M(E,w)\rightarrow V(L_K(E,w))$. It is an easy exercise to show that $\theta:M\rightarrow V\circ L_K$ is a natural transformation (note that for $v\in E^0$ and $1\leq l\leq k_v-1$, the matrix $\epsilon_l(v)$ does not depend on the weight-respecting order of the elements of $s^{-1}(v)$ chosen in the second line of Part I).\\
\\
{\bf Part II} We want to show that the natural transformation $\theta:M\rightarrow V\circ L_K$ defined in Part I is a natural isomorphism, i.e. that $\theta_{(E,w)}:M(E,w)\rightarrow V(L_K(E,w))$ is an isomorphism for any weighted graph $(E,w)$. By \cite[Lemma 5.19]{hazrat13} any weighted graph is a direct limit of a direct system of finite weighted graphs. Hence it is sufficient to show that $\theta_{(E,w)}$ is an isomorphism for any finite weighted graph $(E,w)$ (note that $M$, $V$ and $L_K$ commute with direct limits).\\
Let $(E,w)$ be a finite weighted graph. Set $B_0:=K^{E^0}$. We denote by $\alpha_v$ the element of $B_0$ whose $v$-component is $1$ and whose other components are $0$. Let $\{v_1, \dots, v_m\}$ be the elements of $E^0$ which emit vertices. Let $1\leq t \leq m$ and assume that $B_{t-1}$ has already been defined. We define an associative $K$-algebra $B_t$ as follows. Set $C_{t,0}:=B_{t-1}$ and let $\beta^{t,0}:C_{t,0}\rightarrow C_{t,0}$ be the map sending any element to $0$. For $1\leq l \leq k_{v_t}-1$ define inductively $C_{t,l}:=C_{t,l-1}\langle \beta^{t,l}: \overline{O_{t,l}}\rightarrow \overline{O_{t,l}};(\beta^{t,l})^2=\beta^{t,l}\rangle $ (see \cite[p. 39]{bergman74}) where 
\[O_{t,l}=\im (\beta^{t,l-1})\oplus \bigoplus\limits_{h=w_{l-1}(v_t)+1}^{w_l(v_t)}  \alpha_{v_t}C_{t,l-1}.\]
Set $D_{t,0}:=C_{t,k_{v_t}-1}$. For $1\leq l \leq k_{v_t}-1$ define inductively $D_{t,l}:=D_{t,l-1}\langle \gamma^{t,l},(\gamma^{t,l})^{-1}:\overline{P_{t,l}}\cong\overline{Q_{t,l}}\rangle $ (see \cite[p. 38]{bergman74}) where 
\[P_{t,l}=\bigoplus\limits_{h=n_{l-1}(v_t)+1}^{n_l(v_t)}\alpha_{r(e^{h,v_t})}D_{t,l-1}\text{ and }Q_{t,l}=\ker(\beta^{t,l}).\]
Finally define $B_t:=D_{t,k_{v_t}-1}\langle \gamma^{t,k_{v_t}},(\gamma^{t,k_{v_t}})^{-1}:\overline{P_{t,k_{v_t}}}\cong\overline{Q_{t,k_{v_t}}}\rangle $ where
\begin{align*}
&P_{t,l}=\bigoplus\limits_{h=n_{k_{v_t}-1}(v_t)+1}^{n_{k_{v_t}}(v_t)}\alpha_{r(e^{h,v_t})}D_{t,k_{v_t}-1}\text{ and }\\
&Q_{t,k_{v_t}}=\im(\beta^{t,k_{v_t}-1})\oplus\bigoplus\limits_{h=w_{k_{v_t}-1}(v_t)+1}^{w_{k_{v_t}}(v_t)}  \alpha_{v_t}D_{t,k_{v_t}-1}.
\end{align*}
We will show that $L_K(E,w)\cong B_m$.\\
Investigating the proofs of \cite[Theorems 3.1, 3.2]{bergman74} we see that $B_{m}$ is presented by the generating set 
\begin{align*}
X:=&\{\alpha_{v}\mid v\in E_0\}\cup\{\beta^{t,l}_{i,j}\mid 1\leq t \leq m, 1\leq l\leq k_{v_t}-1, 1\leq i,j\leq w_l(v_t)\}\\
&\cup \{\gamma^{t,l}_{i,j},(\gamma^{t,l}_{j,i})^*\mid 1\leq t \leq m, 1\leq l\leq k_{v_t}, 1\leq i\leq w_l(v_t), 1\leq j \leq n_l(v_t)-n_{l-1}(v_t)\}
\end{align*}
and the relations
\begin{center}
\begin{tabular}{r l l}
(i)&$\alpha_u\alpha_v=\delta_{uv}\alpha_u$&$(u,v\in E^0)$,\\
(ii)& $\id_{O_{t,l}}\beta^{t,l}=\beta^{t,l}=\beta^{t,l}\id_{O_{t,l}}$&$(1\leq t \leq m, 1\leq l\leq k_{v_t}-1),$\\
(iii)&$(\beta^{t,l})^2=\beta^{t,l}$&$(1\leq t \leq m, 1\leq l\leq k_{v_t}-1),$\\
(iv)&$\gamma^{t,l}\id_{P_{t,l}}=\gamma^{t,l}=\id_{Q_{t,l}}\gamma^{t,l}$&$(1\leq t \leq m, 1\leq l\leq k_{v_t})$,\\
(v)& $(\gamma^{t,l})^*\id_{Q_{t,l}}=(\gamma^{t,l})^*=\id_{P_{t,l}}(\gamma^{t,l})^*$&$(1\leq t \leq m, 1\leq l\leq k_{v_t}),$\\
(vi)&$\gamma^{t,l}(\gamma^{t,l})^*=\id_{Q_{t,l}}$&$(1\leq t \leq m, 1\leq l\leq k_{v_t}),$\\
(vii)&$(\gamma^{t,l})^*\gamma^{t,l}=\id_{P_{t,l}}$&$(1\leq t \leq m, 1\leq l\leq k_{v_t})$
\end{tabular}
\end{center}
where $\beta^{t,l}\in\Mat_{w_l(v_t)}(K\X)$ (we denote by $K\X$ the free associative $K$-algebra generated by $X$) is the matrix whose entry at position $(i,j)$ is $\beta^{t,l}_{i,j}$, $\gamma^{t,l}\in\Mat_{w_l(v_t)\times (n_l(v_t)-n_{l-1}(v_t))}(K\X)$ is the matrix whose entry at position $(i,j)$ is $\gamma^{t,l}_{i,j}$, $(\gamma^{t,l})^*\in\Mat_{(n_l(v_t)-n_{l-1}(v_t))\times w_l(v_t)}(K\X)$ is the matrix whose entry at position $(i,j)$ is $(\gamma^{t,l}_{j,i})^*$ and further
\begin{align*}
\id_{O_{t,l}}&=\begin{pmatrix}
\beta^{t,l-1}&&&\\
&\alpha_{v_t}&&\\
&&\ddots&\\
&&&\alpha_{v_t}
\end{pmatrix}\in\Mat_{w_l(v_t)}(K\X),\\
\id_{P_{t,l}}&=\begin{pmatrix}
\alpha_{r(e^{n_{l-1}(v_t)+1,v_t})}&&\\
&\ddots&\\
&&\alpha_{r(e^{n_{l}(v_t),v_t})}
\end{pmatrix}\in\Mat_{n_{l}(v_t)-n_{l-1}(v_t)}(K\X),\\
\id_{Q_{t,l}}&=\id_{O_{t,l}}-\beta^{t,l}\in\Mat_{w_l(v_t)}(K\X)\text{ if } l<k_{v_t}\text{ and }\\
\id_{Q_{t,k_{v_t}}}&=\begin{pmatrix}
\beta^{t,k_{v_t}-1}&&&\\
&\alpha_{v_t}&&\\
&&\ddots&\\
&&&\alpha_{v_t}
\end{pmatrix}\in\Mat_{w_{k_{v_t}}(v_t)}(K\X)
\end{align*}
(we let $\beta^{t,0}$ be the empty matrix). Define an $K$-algebra homomorphism $\zeta:L_K(E,w)\rightarrow B_m$ by \\
\[\zeta(v)=\alpha_v~(v\in E^0),\quad\zeta(A_{w_0,w_l}^{n_{l-1},n_l}(v_t))=\gamma^{t,l},\quad\zeta((A_{w_0,w_l}^{n_{l-1},n_l}(v_t))^*)=(\gamma^{t,l})^*~(1\leq t\leq m,1\leq l\leq k_{v_t})\]\\
(meaning that each entry of $A_{w_0,w_l}^{n_{l-1},n_l}(v_t)$ (resp. $(A_{w_0,w_l}^{n_{l-1},n_l}(v_t))^*$) is mapped to the corresponding entry of $\gamma^{t,l}$ (resp. $(\gamma^{t,l})^*$). Define an $K$-algebra homomorphism $\xi:B_m\rightarrow L_K(E,w)$ by\\ 
\begin{align*}
&\xi(\alpha_v)=v~(v\in E^0),\quad \xi(\beta^{t,l})=\epsilon_l(v_t)~(1\leq t\leq m,1\leq l\leq k_{v_t}-1)\\
 &\xi(\gamma^{t,l})=A_{w_0,w_l}^{n_{l-1},n_l}(v_t), \quad\xi((\gamma^{t,l})^*)=(A_{w_0,w_l}^{n_{l-1},n_l}(v_t))^*~(1\leq t\leq m,1\leq l\leq k_{v_t}).
\end{align*}\\
We leave it to the reader to show that $\zeta$ and $\xi$ are well-defined and further $\xi\circ\zeta=\id_{L_K(E,w)}$ and $\zeta\circ\xi=\id_{B_m}$ (a hint: in order to show that $\zeta(\xi(\beta^{t,l}))=\beta^{t,l}$, it is convenient to use equation (8) and relation (vi) above). Thus $L_K(E,w)\cong B_m$.\\
By \cite[Theorems 5.1, 5.2]{bergman74}, the abelian monoid $V'(B_m)$ (see Remark \ref{rempro}) is presented by the generating set $\{v,p_1^v,\dots,p^v_{k_v-1},q_1^v,\dots,q^v_{k_v-1}\mid v\in E^0\}$ and the relations
\begin{enumerate}[(i)]
\item $q^v_{i-1}+(w_i(v)-w_{i-1}(v))v=q_i^v+p_i^v\quad(v\in E^0,1\leq i\leq k_v-1)$,
\item $p_i^v=\sum\limits_{\substack{e\in s^{-1}(v),\\w(e)=w_i(v)}}r(e)\quad(v\in E^0,1\leq i\leq k_v-1)$ and
\item $q^v_{k_v-1}+(w_{k_v}(v)-w_{k_v-1}(v))v=\sum\limits_{\substack{e\in s^{-1}(v),\\w(e)=w_{k_v}(v)}}r(e)\quad(v\in E^0)$
\end{enumerate}
where $q^v_0=0$. It follows from Lemma \ref{lemmon} that $M(E,w)\cong V'(B_m)\cong V'(L_K(E,w))\cong V(L_K(E,w))$. One checks easily that the monoid isomorphism $M(E,w)\rightarrow V(L_K(E,w))$ one gets in this way is precisely $\theta_{(E,w)}$. \\
Furthermore, the right global dimension of $B_m\cong L_K(E,w)$ is $\leq 1$ by \cite[Theorems 5.1, 5.2]{bergman74}, i.e. $L_K(E,w)$ is right hereditary. Since $L_K(E,w)$ is a ring with involution, we have $L_K(E,w)\cong L_K(E,w)^{op}$. Thus $L_K(E,w)$ is also left hereditary.
\end{proof}
\begin{corollary}\label{cornum}
Let $(E,w)$ be a weighted graph. If there is a vertex $v\in E^0$ such that $k_v>1$ (i.e. there are $e,f\in s^{-1}(v)$ such that $w(e)\neq w(f)$), then $|V(L_K(E,w))|=\infty$.
\end{corollary}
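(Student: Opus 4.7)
My plan is to reduce to a monoid-theoretic statement via Theorem~\ref{thmm} and then exhibit an explicit monoid homomorphism with infinite image. Since $V\circ L_K\cong M$, it is enough to show $M(E,w)$ is infinite whenever some $k_v>1$. Under this hypothesis the generator $q_1^v$ is present in the presentation of $M(E,w)$, and my plan is to construct a monoid homomorphism $\bar\psi:M(E,w)\to\N_0\cup\{\infty\}$ sending $q_1^v\mapsto 1$; the images $\bar\psi(n\cdot q_1^v)=n$ will then be pairwise distinct, forcing $|M(E,w)|=\infty$.

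For the target I would take the commutative monoid $\N_0\cup\{\infty\}$ obtained from $(\N_0,+)$ by formally adjoining an absorbing element, i.e.\ $\infty+x=\infty$ for every $x$. On the free abelian monoid on the generators of $M(E,w)$ I would set $\psi(q_1^v):=1$ and $\psi(x):=\infty$ for every other generator. To check that $\psi$ respects each defining relation
\[
q^u_{i-1}+(w_i(u)-w_{i-1}(u))\,u \;=\; q_i^u+\sum_{\substack{e\in s^{-1}(u),\\ w(e)=w_i(u)}}r(e),
\]
I would observe that both sides evaluate to $\infty$: the left-hand side because $w_i(u)-w_{i-1}(u)\geq 1$ and $\psi(u)=\infty$, and the right-hand side because $w_i(u)$ is, by definition, attained by at least one edge in $s^{-1}(u)$ so the sum is nonempty, and $\psi(r(e))=\infty$ on every range vertex. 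Hence $\psi$ descends to the desired $\bar\psi$, and the proof concludes at once.

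The one genuinely subtle point is the choice of target. One cannot simply map into $\N_0$ or into the group completion $K_0$: the element $q_1^v$ may well be torsion in $K_0$ (as happens, for example, when every edge out of $v$ terminates at a single sink, so that the relations collapse to $w(v)v=|s^{-1}(v)|\,r(e)$ in the group completion), and then no $\mathbb Z$-valued character separates its multiples. The absorbing element $\infty$ is precisely what lets a single homomorphism work uniformly across all weighted graphs: it swallows the ``vertex part'' of every relation, leaving the $q_1^v$-coefficient free to record a faithful monoid invariant.
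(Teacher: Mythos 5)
Your proof is correct. Both you and the paper reduce via Theorem~\ref{thmm} to showing that the elements $nq_1^v$ $(n\in\N_0)$ are pairwise distinct in $M(E,w)$, but you verify this differently. The paper argues directly on the congruence: it asserts (``one checks easily'') that the class $[nq_1^v]$ is the singleton $\{nq_1^v\}$, the underlying reason being that every defining relation has a vertex generator on each side, so no relation can ever be applied to a word consisting only of copies of $q_1^v$. You package exactly this observation into an explicit separating homomorphism $\bar\psi:M(E,w)\to\N_0\cup\{\infty\}$ with $\infty$ absorbing, sending $q_1^v\mapsto 1$ and all other generators to $\infty$; the relation check (left side contains $(w_i(u)-w_{i-1}(u))u$ with $w_i(u)-w_{i-1}(u)\geq 1$, right side contains at least one $r(e)$ since the weight $w_i(u)$ is attained) is complete and correct. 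Your route is slightly longer but arguably more rigorous, since it replaces the paper's ``one checks easily'' with a concrete invariant, and your closing remark about why no $\mathbb{Z}$-valued character can work uniformly (because $q_1^v$ may be torsion in $K_0$) is a genuine insight that the paper does not make explicit.
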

\begin{proof}
Let $v\in E^0$ be a vertex such that $k_v>1$. For any $n\in \N_0$ let $[nq_1^v]$ denote the equivalence class of $nq^v_1$ in $M(E,w)$. One checks easily that $[nq_1^v]=\{nq_1^v\}$. Hence the elements $[nq_1^v]~(n\in\N_0)$ are pairwise distinct in $M(E,w)$ (and therefore we have an embedding $\N_0\hookrightarrow M(E,w)$ defined by $n\mapsto [nq_1^v]$). It follows from Theorem \ref{thmm} that $|V(L_K(E,w))|=\infty$.
\end{proof}

In \cite[Section 4]{hazrat-preusser} it was ``proved" by using the false \cite[Theorem 5.21]{hazrat13}, that if $(E,w)$ is an LV-rose (see \cite[Definition 38]{hazrat-preusser}) such that the minimal weight is $2$, the maximal weight is $l\geq 3$ and the number of edges is $l+m$ for some $m > 0$, then the domain $L_K(E,w)$ is not isomorphic to any of the Leavitt algebras $L_K(n,n+k)$ where $n,k\geq 1$. Using Theorem \ref{thmm} we prove a stronger statement:
\begin{corollary}
Let $(E,w)$ be an LV-rose such that there are edges of different weights. Then $L_K(E,w)$ is a domain that is neither $K$-algebra isomorphic to a Leavitt path algebra $L_K(F)$ nor to a Leavitt algebra $L_K(n,n+k)$.
\end{corollary}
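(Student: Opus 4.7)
The plan is to combine Theorem~\ref{thmm} and Corollary~\ref{cornum} with the fact, established in \cite{hazrat-preusser}, that $L:=L_K(E,w)$ is a domain whenever $(E,w)$ is an LV-rose. Consequently, any $K$-algebra isomorphic to $L$ must itself be a domain, which sharply restricts the possible candidates on both lists.

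To rule out a Leavitt algebra $L_K(n,n+k)$: if $k=0$, then $L_K(n,n)\cong\Mat_n(K)$ is finite-dimensional, while $L$ is infinite-dimensional because any generator $e_1$ (for an edge $e$ of $(E,w)$) is non-nilpotent by the domain property, so $\{e_1^m:m\in\N_0\}$ is $K$-linearly independent. If $k\geq 1$, then $V(L_K(n,n+k))$ is the finite monoid $\langle x\mid nx=(n+k)x\rangle$ (the $k_v=1$ instance of Theorem~\ref{thmm}), whereas Corollary~\ref{cornum} gives $|V(L)|=\infty$ because $(E,w)$ has edges of different weights. Either way $L\not\cong L_K(n,n+k)$.

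To rule out a Leavitt path algebra $L_K(F)$: for $L_K(F)$ to be a domain, $F$ must have exactly one vertex $u$ (two distinct vertices would yield nonzero orthogonal idempotents) and $u$ must emit at most one edge (otherwise $\sum_{e\in s^{-1}(u)}ee^{*}=u$ would express the identity as a sum of two or more nonzero orthogonal idempotents, incompatible with being a domain). Hence up to isomorphism $L_K(F)\in\{K,\,K[x,x^{-1}]\}$, both of which have $V$-monoid $\cong \N_0$. It therefore suffices to show $V(L)\cong M(E,w)\not\cong\N_0$. For this one picks a vertex $v$ with $k_v>1$ and uses the singleton-class observation from Corollary~\ref{cornum}: the sequence $0,[q_1^v],2[q_1^v],\dots$ is disjoint from the sequence $[v],2[v],\dots$, producing two independent infinite chains in $M(E,w)$, so $M(E,w)$ cannot be cyclic.

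The main obstacle I anticipate is making the last step---the incompatibility $M(E,w)\not\cong\N_0$---fully precise for an arbitrary LV-rose with mixed weights, since the presentation in Definition~\ref{defM} contains $k_v$ relations that could a priori conspire to collapse the two chains. A cleaner backup is to observe that $L$ is non-commutative (for any two edges $e\neq f$ of different weights one can extract a non-commuting pair from the relations of Definition~\ref{def3}), while both $K$ and $K[x,x^{-1}]$ are commutative, immediately yielding $L\not\cong L_K(F)$.
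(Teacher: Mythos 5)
Your overall strategy coincides with the paper's (use the domain property to shrink the candidate list, then separate by $V$-monoids via Theorem~\ref{thmm} and Corollary~\ref{cornum}), and your classification of the graphs $F$ with $L_K(F)$ a domain, as well as your treatment of the case $k\geq 1$, match the paper's proof. However, there are two genuine problems. First, your claim that $L_K(n,n)\cong\Mat_n(K)$ is false: by Example~\ref{wlpapp}, $L_K(n,n)$ is the wLpa of a rose with $n$ loops each of weight $n$, which is infinite dimensional (already for $n=1$ it is $K[x,x^{-1}]$, not $\Mat_1(K)=K$), so the finite-dimensionality argument for $k=0$ collapses. The paper instead observes that $V(L_K(n,n))\cong\N_0$ and disposes of this case by the same argument used against Leavitt path algebras; your proof needs that substitution.

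Second, the step you yourself flag as the main obstacle is indeed a gap as stated: exhibiting two disjoint infinite families $\{n[q_1^v]\}$ and $\{m[v]\}_{m\geq 1}$ does not preclude $M(E,w)\cong\N_0$, since $\N_0$ itself contains disjoint infinite subsets. The paper closes this by using the indecomposability of $q_1^v$ (if $q_1^v=a+b$ in $M(E,w)$ then $\{a,b\}=\{0,q_1^v\}$, which follows from the singleton-class computation in Corollary~\ref{cornum}): any isomorphism $\phi:\N_0\rightarrow M(E,w)$ would be forced to send $1$ to $[q_1^v]$, and then $[v]$ would not lie in the image. Your backup via noncommutativity of $L_K(E,w)$ would also work but is itself only asserted; it requires the linear bases of \cite{hazrat-preusser} (or an explicit non-commuting pair) to justify, so as written neither route to $L\not\cong L_K(F)$ is complete.
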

\begin{proof}
First we show that $L_K(E,w)$ is not isomorphic to a Leavitt path algebra. By \cite[Theorem 41]{hazrat-preusser}, $L_K(E,w)$ is a domain (i.e. a nonzero ring without zero divisors). It is well-known that if $F$ is a directed graph such that $L_K(F)$ is a domain, then $F$ is either the graph $\xymatrix{\bullet}$ and we have $L_K(F)\cong K$, or the graph $\xymatrix{\bullet\ar@(ur,dr)}$\hspace{0.5cm} and we have $L_K(F)\cong K[x,x^{-1}]$. In both cases we have $V(L_K(F))\cong\N_0$ by Example \ref{exex1} and Theorem \ref{thmm}. Assume that there is an isomorphism $\phi:\N_0\rightarrow M(E,w)$. One checks easily that if $q_1^v=a+b$ for some $a,b\in M(E,w)$, then $a=0$ and $b=q_1^v$ or vice versa. Hence $\phi(1)=q_1^v$. But then $\phi$ cannot be surjective (see the proof of the previous corollary). Hence $V(L_K(E,w))\not\cong \N_0$ and therefore $L_K(E,w)$ is not isomorphic to a Leavitt path algebra $L_K(F)$.\\
Next we show that $L_K(E,w)$ is not isomorphic to a Leavitt algebra $L_K(n,n+k)$ where $n\geq 1$ and $k\geq 0$. It follows from Example \ref{wlpapp} and Theorem \ref{thmm} that $V(L_K(n,n+k))\cong \N_0/\langle n=n+k\rangle$. If $k=0$, then $V(L_K(n,n+k))\cong \N_0$ and therefore $L_K(E,w)$ is not isomorphic to $L_K(n,n+k)$ by the previous paragraph. Suppose now that $k\geq 1$. Then $|V(L_K(n,n+k))|=n+k<\infty$. But by Corollary \ref{cornum}, $|V(L_K(E,w))|=\infty$. Hence $L_K(E,w)$ is not isomorphic to a Leavitt algebra $L_K(n,n+k)$.
\end{proof}

Now we consider $K_0$ of a wLpa. Let $(E,w)$ denote a weighted graph. Since $L_K(E,w)$ is clearly a ring with local units, $K_0(L_K(E,w))$ is the group completion $(V(L_K(E,w)))^+$ of the abelian monoid $V(L_K(E,w))$, see \cite[p. 77]{abrams-ara-molina}. By Theorem \ref{thmm}, $(V(L_K(E,w)))^+\cong (M(E,w))^+$. It follows from \cite[Equation (45)]{hazrat13} that $(M(E,w))^+$ is presented as abelian group by the generating set $\{v,q_1^v,\dots,q^v_{k_v-1}\mid v\in E^0\}$ and the relations
\begin{equation*}
q^v_{i-1}+(w_i(v)-w_{i-1}(v))v=q_i^v+\sum\limits_{\substack{e\in s^{-1}(v),\\w(e)=w_i(v)}}r(e)\quad\quad(v\in E^0,1\leq i\leq k_v)
\end{equation*}
where $q^v_0=q^v_{k_v}=0$. We can rewrite the relations above in the form 
\begin{align*}
&q_i^v=q^v_{i-1}+(w_i(v)-w_{i-1}(v))v-\sum\limits_{\substack{e\in s^{-1}(v),\\w(e)=w_i(v)}}r(e)\quad\quad(v\in E^0,1\leq i\leq k_v).
\end{align*}
By successively applying Lemma \ref{lemmon} we get that $(M(E,w))^+$ is presented by the generating set $E^0$ and the relations 
\[w(v)v=\sum\limits_{e\in s^{-1}(v)}r(e)\quad\quad(v\in E^0).\] 
It follows that \cite[Theorem 5.23]{hazrat13} is correct!
\section{Examples}
\begin{example}
Consider the weighted graph\\
\[
(E,w):\xymatrix@C+15pt{ u& v\ar[l]_{e,1}\ar[r]^{f,2}& x}.
\]\\
As mentioned at the beginning of the previous section, $L_K(E,w)\cong \Mat_3(K)\oplus \Mat_3(K)$.
By Theorem \ref{thmm} and Lemma \ref{lemmon}, 
\[V(L_K(E,w))\cong \N_0^{\{u,v,q_1^v,x\}}/\langle \alpha_v=\alpha_{q_1^v}+\alpha_u,\alpha_{q_1^v}+\alpha_v=\alpha_x\rangle\cong \N_0^2.\]
\end{example}
\begin{example}
Consider the weighted graph
\[
(E,w):\xymatrix@C+15pt{ u& v\ar@/_1.7pc/[l]_{e,1}\ar@/^1.7pc/[l]^{f,2}}.
\]
Let $F$ be the directed graph
\[
F:\xymatrix@R+15pt@C+25pt{ u_1&u_2\ar[l]_{g}&u_3\ar@(dr,ur)_{j}\ar[l]_{h}\ar@/_2pc/[ll]_{i}\\ &v\ar@/_0.8pc/[u]_{e^{(2)}}\ar@/^0.8pc/[u]^{f}\ar[ul]^{e^{(1)}}\ar[ur]_{e^{(3)}}&}.
\]
There is a $*$-algebra isomorphism $L_K(E,w)\rightarrow L_K(F)$ mapping $u\mapsto \sum u_i$, $v\mapsto v$, $e_1 \mapsto \sum e^{(i)}$, $f_1\mapsto f$ and $f_2\mapsto fg+e^{(1)}i^*+e^{(2)}h^* +e^{(3)}j^*$.  
By Theorem \ref{thmm} and Lemma \ref{lemmon}, 
\[V(L_K(E,w))\cong \N_0^{\{u,v,q_1^v\}}/\langle \alpha_v=\alpha_{q_1^v}+\alpha_u,\alpha_{q_1^v}+\alpha_v=\alpha_u\rangle\cong \N_0^2/\langle(1,0)=(1,2)\rangle.\]
\cite[Theorem 3.2.5]{abrams-ara-molina} (or Theorem \ref{thmm}, which generalises \cite[Theorem 3.2.5]{abrams-ara-molina}) yields the same result for $V(L_K(F))$. 
\end{example}
\begin{example}
Consider the weighted graph\\
\[
(E,w):\xymatrix@C+15pt{ v\ar@(dl,ul)^{e,1}\ar@(dr,ur)_{f,2}}.
\]\\
By Theorem \ref{thmm}, 
\[V(L_K(E,w))\cong \N_0^{\{v,q_1^v\}}/\langle \alpha_v=\alpha_{q_1^v}+\alpha_v,\alpha_{q_1^v}+\alpha_v=\alpha_v\rangle\cong \N_0^2/\langle(1,0)=(1,1)\rangle.\]
Let $F$ be the directed graph\\
\[
F:\xymatrix@C+15pt{u\ar@(dl,ul)^{e}\ar[r]^{f}&v}.
\]\\
Its Leavitt path algebra $L_K(F)$ is called {\it algebraic Toeplitz $K$-algebra}, see \cite[Example 1.3.6]{abrams-ara-molina}. By \cite[Theorem 3.2.5]{abrams-ara-molina} we have $V(L_K(F))\cong V(L_K(E,w))$. But $\GKdim L_K(F)=2$ by \cite[Theorem 5]{zel12} while $\GKdim L_K(E,w)=\infty$ by \cite[Theorem 22]{preusser}. Hence $L_K(F)\not\cong L_K(E,w)$.
\end{example}
\begin{example}
Consider the LV-roses
\[(E,w):\xymatrix{
v \ar@(ur,dr)^{e,3} \ar@(dr,dl)^{f,3} \ar@(dl,ul)^{g,3}\ar@(ul,ur)^{h,3}& 
}\quad \text{ and }\quad (E,w'):\xymatrix{
 v \ar@(ur,dr)^{e,2} \ar@(dr,dl)^{f,3} \ar@(dl,ul)^{g,3}\ar@(ul,ur)^{h,3}& 
}.
\]
By Theorem \ref{thmm}, 
\[V(L_K(E,w))\cong \N_0^{\{v\}}/\langle 3\alpha_v=4\alpha_v\rangle\]
and
\[V(L_K(E,w'))\cong \N_0^{\{v,q_1^v\}}/\langle 2\alpha_v=\alpha_{q_1^v}+\alpha_v,\alpha_{q_1^v}+\alpha_v=3\alpha_v\rangle.\]\\
Since the image of $L_K(E,w)$ in $V(L_K(E,w))$ (resp. of $L_K(E,w')$ in $V(L_K(E,w'))$) is $\alpha_v$ (see Remark \ref{rempro}), the module type of $L_K(E,w)$ is $(3,1)$ and the module type of $L_K(E,w')$ is $(2,1)$.
\end{example}

\end{document}